\newtheorem{theorem}{Theorem}[section]
\newtheorem{lemma}[theorem]{Lemma}
\newtheorem{problem}[theorem]{Problem}
\newtheorem{notation}[theorem]{Notation}
\newtheorem{definition}[theorem]{Definition}
\newtheorem{remark}[theorem]{Remark}
\newcommand{\MO}{\mathcal O}
\newcommand{\MP}{\mathcal P}
\newcommand{\MW}{\mathcal W}
\newcommand{\N}{\mathbb N}
\newcommand{\R}{\mathbb R}
\newcommand{\IP}{\mathbb P}
\newcommand{\E}{\mathbb E}
\newcommand{\bx}{\boldsymbol{x}}
\newcommand{\by}{\boldsymbol{y}}
\newcommand{\bz}{\boldsymbol{z}}
\newcommand{\ba}{\boldsymbol{\alpha}}
\newcommand{\on}{\operatorname}
\newcommand{\card}{\operatorname{card}}
\author{Jacek Marchwicki}
\address{Institute of Mathematics, \L \'od\'z University of Technology,
W\'olcza\'nska 215, 93-005 \L \'od\'z, Poland}
\email {marchewajaclaw@gmail.com}
\author{V\' aclav Vlas\'ak}
\address{Faculty of Mathematics and Physics, Charles University, Sokolovsk\'a 83, 18675 Praha~8, Czech Republic}
\email{vlasakvv@gmail.com}
\title[Subsums of conditionally convergent series in finite dimensional spaces]{Subsums of conditionally convergent series in finite dimensional spaces}
\subjclass[2010]{Primary: 40A05 ; Secondary: 11K31} 
\keywords{achievement set, set of subsums, conditionally convergent series, sum range, prime numbers}
\begin{document}

\begin{abstract}
An achievement set of a series is a set of all its subsums. We study the properties of achievement sets of conditionally convergent series in finite dimensional spaces. The purpose of the paper is to answer some of the open problems formulated in \cite{GM}. We obtain general results for series with harmonic-like coordinates, that is $\on{A}((-1)^{n+1}n^{-\alpha_1},\dots,(-1)^{n+1}n^{-\alpha_d})=\R^d$ for pairwise distinct numbers $\alpha_1,\dots,\alpha_d\in(0,1]$. For $d=2$, $\alpha_1=1, \alpha_2=\frac{1}{2}$ it was stated as an open problem in  \cite{GM}, that is $\on{A}(\frac{(-1)^n}{n},\frac{(-1)^n}{\sqrt{n}})=\R^2$.
\end{abstract} 

\maketitle

\section{Introduction}

For a sequence $(x_{n})$ (or a series $\sum_{n=1}^\infty x_n$) we call the set $\on{A}(x_{n})=\{\sum_{n=1}^\infty\varepsilon_nx_n : (\varepsilon_{n})\in\{0,1\}^{\mathbb{N}}\}$ \emph{the set of subsums} or \emph{the achievement set}. 
This notion was mostly studied for absolutely summable sequences on the real line. Probably the first paper where topological properties of achievement sets were investigated is that of Kakeya \cite{Kakeya}.
He proved that such sets can be:
\begin{itemize}
\item finite sets,
\item finite unions of compact intervals (if $\vert x_k\vert\leq\sum_{n=k+1}^\infty \vert x_n\vert$ for almost all $k$ and it is a single interval if the inequality holds for all $k$),
\item homeomorphic to the Cantor set (if $\vert x_k\vert>\sum_{n=k+1}^\infty \vert x_n\vert$ for almost all $k$ - so called quickly convergent series $\sum_{n=1}^\infty x_n$).
\end{itemize}

Kakeya conjectured that Cantor-like sets and finite unions of closed intervals are the only possible achievement sets for sequences $(x_{n})\in \ell_{1}\setminus c_{00}$. The results of Kakeya were rediscovered many times and his conjecture was repeated, even after the first counterexamples were given. In 1970 Renyi in \cite{R} repeated the thesis of Kakeya Theorem in terms of purely atomic measures and he asked if the Cantor-like sets and finite unions of closed intervals are the only possible sets being the ranges of finite measures. Geometric properties of achievement sets of absolutely summable sequences and ranges of purely atomic $\sigma$-finite measures are the same. This follows from the simple observation, that the set of sums of subseries for the series $\sum_{n=1}^{\infty} x_n$ is isometric to the analogous set for the series of their absolute values $\sum_{n=1}^{\infty} \vert x_n\vert$. Therefore a positive answer for the Renyi's question is equivalent to the Kekeya's conjecture. However the first counterexamples were published by Weinstein and Shapiro \cite{WS}, Ferens \cite{Ferens} and Guthrie and Nymann \cite{GN}. It is worth to mention that the motivation of Ferens' paper \cite{Ferens} came from measure theory; namely, the Author constructed purely atomic probabilistic measure the range of which is neither finite union of intervals nor homeomorphic to the Cantor set. Due to Guthrie, Nymann and Saenz \cite{GN,NS1} we know that the achievement set of an absolutely summable sequence can be a finite set, a finite union of intervals, homeomorphic to the Cantor set or it can be a so-called Cantorval. A Cantorval is a set homeomorphic to the union of the Cantor set and sets which are removed from the unit segment by even steps of the construction of the Cantor set.  It is known that a Cantorval is such nonempty compact
set in $\mathbb{R}$, that it is the closure of its interior and both
endpoints of any nontrivial component are accumulation points of its trivial
components. Other topological characterizations of Cantorvals can be found
in \cite{BFPW} and \cite{MO}.
All known examples of sequences whose achievement sets are
Cantorvals belong to the class of multigeometric sequences or are linear combinations of such sequences, see \cite{BBGS},\cite{BBGS1}. This class was
deeply investigated in \cite{Jones}, \cite{BFS} and \cite{BBFS}. In particular,
the achievement sets of multigeometric series and similar sets obtained in more
general case are the attractors of the affine iterated function systems, see 
\cite{BBFS}. More information on achievement sets can be found in the
surveys \cite{BFPW}, \cite{N1} and \cite{N2}.


Achievement sets can also be considered for conditionally convergent series but then we take only those $(\varepsilon_{n})\in\{0,1\}^{\mathbb{N}}$ for which $\sum_{n=1}^\infty\varepsilon_nx_n$ converges.
By $\on{SR}(x_n)=\{\sum_{n=1}^\infty x_{\sigma(n)} : \sigma\in S_{\infty}\}$ we denote the sum range of a series $\sum_{n=1}^\infty x_n$.  If $\sum_{n=1}^\infty x_n$ is conditionally convergent in $\R^m$, then the classical Levy-Steinitz Theorem states that $\on{SR}(x_n)$ is an affine subset of the underlying space. More precisely, $\on{SR}(x_n)=\sum_{n=1}^\infty x_n+\Gamma^{\perp}$ where $\Gamma^{\perp}$ is a subspace orthogonal to the set $\Gamma=\{f\in(\R^m)^*:\sum_{n=1}^\infty\vert f(x_n)\vert<\infty\}$ of all functionals of series convergence. The theory of rearrangements of conditionally convergent series in Banach spaces, and further in topological vector spaces, has been developed and deeply investigated by many authors; we refer the reader to the monograph \cite{KadKad} for details. 
In \cite{BGM} the authors focused mostly on the case when $\sum_{n=1}^\infty x_n$ is conditionally convergent in $\R^2$ and $\on{SR}(x_n)$ is a line. 
They showed that $\on{A}(x_{n})$ can essentially differ from $\on{SR}(x_n)$, in particular when the sum range is one dimensional, affine subspace of  $\R^2$ then
it is possible to obtain the achievement set, which is: not closed; a graph of function; neither an $F_{\sigma}$ nor a $G_{\delta}$-set; open set differs from $\R^2$.
They made a general observation that $\on{SR}(x_n)=\R^m$ if and only if the closure of $\on{A}(x_{n})$ equals $\R^m$ as well. The authors also constructed an example of series on the plane such that $\on{SR}(x_n)=\R^2$ and $\on{A}(x_{n})$ is dense and null. The counterpart of this example constructed in \cite{GM} shows that it is also possible to obtain $\on{A}(x_n)$ as a graph of a partial function, when $\on{SR}(x_n)=\R^2$.
 
A partial answer to the question what needs to be assumed on the series $\sum_{n=1}^\infty x_n$ with $\on{SR}(x_n)=\R^2$ to obtain $\on{A}(x_{n})=\R^2$ is given in \cite{GM}. It depends firstly on the number of Levy vectors. A vector $u\in\mathbb{R}^2$, $\Vert u\Vert =1$ is called the Levy vector of a series $\sum_{n=1}^{\infty}v_n$ if for every $\varepsilon>0$ we have $\sum_{v_n\in S_{\varepsilon}(u)} \Vert v_{n}\Vert=\infty$, where  $S_{\varepsilon}(u)=\{v : \langle u,v\rangle \geq (1-\varepsilon) \Vert u\Vert \Vert v \Vert\}$, where by $\langle u,v\rangle$ we denote the scalar product of $u$ and $v$.
The authors showed that if a series has more than two Levy vectors, then $\on{A}(x_n)=\on{SR}(x_n)=\R^2$. 
They proved even more: for any $a\in\R^2$ there is an increasing sequence $(n_k)$ of indexes such that $\sum_{k=1}^\infty x_{n_k}$ is absolutely convergent to $a$. In symbols: $\on{A}_{\on{abs}}(x_{n})=\R^2$, where $\on{A}_{\on{abs}}(x_{n})=\{\sum_{n=1}^{\infty} \varepsilon_{n} x_{n}  : \sum_{n=1}^{\infty} \varepsilon_{n}\Vert x_{n}  \Vert<\infty,  \varepsilon_{n}\in\{0,1\} \ \text{for each} \  n\in\mathbb{N} \}$.
 The authors also found a necessary condition (\emph{reduction property}) for $\on{A}(x_n)=\R^2$ for a series with  exactly two Levy vectors. 
They constructed an example of series $\sum_{n=1}^\infty x_n$ with two Levy vectors and such that $\on{A}_{\on{abs}}(x_n)\neq \on{A}(x_n)=\R^2$. 
At the end of \cite{GM} the authors proposed some open problems. One of them was to check if the equality $\on{A}(\frac{(-1)^n}{n},\frac{(-1)^n}{\sqrt{n}})=\R^2$ holds. The series $\sum\limits_{n=1}^{\infty}(\frac{(-1)^n}{n},\frac{(-1)^n}{\sqrt{n}})$ is very problematic since it has two Levy vectors and it is not known if it satisfies the reduction property. In this paper we give a positive answer to that question, that is $\on{A}(\frac{(-1)^n}{n},\frac{(-1)^n}{\sqrt{n}})=\R^2$. We obtain something more general, that is $\on{A}_{\on{abs}}(\frac{(-1)^n}{n},\frac{(-1)^n}{n^{\alpha}})=\R^2$ for each $\alpha\in (0,1)$. We study its generalization in higher dimensions.

\section{Main result}
 In this chapter we often represent a set $A\subset\N$ as $\{a_n;\ n\in\N\}$. In those cases, we assume that this sequence is increasing.

\begin{notation} Let $d\in\N$, $A\subset\N$, $\bx=(x_1,\dots,x_d)\in\R^d$, $\ba=(\alpha_1,\dots,\alpha_d)\in(0,1]^d$ and $\alpha,\delta\in(0,1]$. Then we can apply some useful notions.

\begin{itemize}

\item{} $\IP=\{n\in\N;\ n\text{ is odd prime number}\},$
\item{} $<\bx,\ba>=(x^1\alpha^1,\dots,x^d\alpha^d)$,
\item{} $\E_{\delta}=\{\{a_n;\ n\in\N\}\subset\N;\ (\forall n\in\N:\ a_{n+1}\leq a_n(1+\delta))\wedge (\exists\epsilon>0\exists n_0\in\N\forall n\geq n_0:\ a_{n+1}\geq a_n(1+\epsilon))\}$,
\item{} $\MP=\{A\subset\IP;\ (\sum_{n\in A}\frac{1}{n}=\infty)\wedge (\forall\delta\in(0,1]\exists A_{\delta}\subset A:\ A_{\delta}\in\E_{\delta}\}$,
\item{} $\MO^d=\{((x^i)_{i=1}^d,(y^i)_{i=1}^d)\in\R^{2d};\ \forall i\in\{1,\dots,d\}:(x^iy^i<0)\vee(x^i=y^i=0)\}$,
\item{} $\Phi(A,\alpha)=\sum_{n\in A}n^{-\alpha}(-1)^{n+1}$, if series is absolutely convergent or $A\subset(2\N+1)$,
\item{} $\Psi(A,\ba)=(\Phi(A,\alpha_1),\dots,\Phi(A,\alpha_d))$, if series is absolutely convergent or $A\subset(2\N+1)$,
\item{} $A_{|_d}=A\cap[d+1,+\infty)$.

\end{itemize}

\end{notation}

\begin{definition}

For $i\in\N$ we inductively define collections $\MW_i$ and $\MW$ of subsets of $\N$. We put
\begin{eqnarray}
\MW_{1}&=&\{A\subset\N;\ \exists B\in\E_1:\ A\subset B\},\nonumber\\
\MW_{i+1}&=&\{A\cdot B\cup C;\ A\in\MW_1\wedge B,C\in\MW_i\},\ i\in\N\nonumber\\
\MW&=&\bigcup_{i=1}^{\infty}\MW_i.\nonumber
\end{eqnarray}

\end{definition}

\begin{definition}

Let $A\subset\N$, $B\subset\IP$ and $p\in B$. We say that $A$ is constructed from $(B,p)$ if every element of $A$ is not divisible by any element of $\IP\setminus B$ and is divisible by $p$.

\end{definition}

In the next lemma we show that for $A\in\MW$, the series $\Phi(A,\alpha)$ is absolutely convergent. Thus, the ordering of $A$ is not important and the notion $\Phi(A,\alpha)$ is well defined. If we assume that $A$ is a subset of odd numbers, then all terms of $\Phi(A,\alpha)$ are positive, so $\Phi(A,\alpha)$ is also well defined. In the following paper we will use $\Phi(A,\alpha)$ only for $A\in\MW$ or $A\subset\IP$. 

\begin{lemma} \label{base1}

Let $\alpha\in(0,1]$.

\begin{itemize}

\item[(A)] Let $A,B\subset\N$. Then $\sum_{n\in A\cdot B}(n)^{-\alpha}\leq\sum_{n\in A}(n)^{-\alpha}\sum_{n\in B}(n)^{-\alpha}$ and the equality holds if the mapping $(a,b)\to a\cdot b$ is injective on $A\times B$.
\item[(B)] Let $A\in\E_1$. Then $\sum_{n\in A}n^{-\alpha}<+\infty$.
\item[(C)] Let $A\in\MW$. Then $\sum_{n\in A}n^{-\alpha}<+\infty$.

\end{itemize}

\end{lemma}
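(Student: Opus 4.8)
The plan is to prove the three parts in order, since (C) will follow from (A) and (B) by induction on the level $i$ of the hierarchy $\MW_i$.

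For part (A), I would argue directly from the definition of the product set $A\cdot B=\{ab : a\in A,\ b\in B\}$. Each element $n\in A\cdot B$ can be written (possibly in several ways) as $n=ab$ with $a\in A$, $b\in B$, and for such a representation $n^{-\alpha}=a^{-\alpha}b^{-\alpha}$. Summing $a^{-\alpha}b^{-\alpha}$ over \emph{all} pairs $(a,b)\in A\times B$ gives exactly $\sum_{a\in A}a^{-\alpha}\sum_{b\in B}b^{-\alpha}$ (the product of the two sums, valid for nonnegative terms by Tonelli/Fubini for series). Since every $n\in A\cdot B$ is hit by at least one pair, and all terms are nonnegative, $\sum_{n\in A\cdot B}n^{-\alpha}\le\sum_{(a,b)\in A\times B}a^{-\alpha}b^{-\alpha}$; if the map $(a,b)\mapsto ab$ is injective on $A\times B$, every $n$ is hit exactly once and we get equality. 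This step is routine.

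For part (B), let $A=\{a_n;\ n\in\N\}\in\E_1$ be written increasingly. By definition of $\E_1$ there exist $\epsilon>0$ and $n_0$ such that $a_{n+1}\ge a_n(1+\epsilon)$ for all $n\ge n_0$; hence $a_n\ge a_{n_0}(1+\epsilon)^{n-n_0}$ for $n\ge n_0$, so $a_n$ grows at least geometrically. Then for $n\ge n_0$ we have $a_n^{-\alpha}\le a_{n_0}^{-\alpha}(1+\epsilon)^{-\alpha(n-n_0)}$, and $\sum_n (1+\epsilon)^{-\alpha(n-n_0)}$ is a convergent geometric series (as $(1+\epsilon)^{-\alpha}<1$). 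Adding the finitely many terms with $n<n_0$ gives $\sum_{n\in A}a_n^{-\alpha}<\infty$. Note the first condition $a_{n+1}\le a_n(1+\delta)$ in the definition of $\E_\delta$ is not needed here; only the eventual geometric lower bound matters.

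For part (C), I would induct on $i$ to show that $\sum_{n\in A}n^{-\alpha}<\infty$ for every $A\in\MW_i$. The base case $i=1$: if $A\in\MW_1$ then $A\subset B$ for some $B\in\E_1$, so $\sum_{n\in A}n^{-\alpha}\le\sum_{n\in B}n^{-\alpha}<\infty$ by (B). For the inductive step, suppose the claim holds for $\MW_i$ and let $A=A'\cdot B'\cup C'$ with $A'\in\MW_1$ and $B',C'\in\MW_i$. Then
\begin{equation}
\sum_{n\in A}n^{-\alpha}\le\sum_{n\in A'\cdot B'}n^{-\alpha}+\sum_{n\in C'}n^{-\alpha}\le\Bigl(\sum_{n\in A'}n^{-\alpha}\Bigr)\Bigl(\sum_{n\in B'}n^{-\alpha}\Bigr)+\sum_{n\in C'}n^{-\alpha},\nonumber
\end{equation}
where the first inequality is subadditivity of the sum over the union, and the second uses part (A). All three sums on the right are finite by the base case and the inductive hypothesis, so $\sum_{n\in A}n^{-\alpha}<\infty$. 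Since $\MW=\bigcup_i\MW_i$, this proves (C).

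The only mild subtlety — not really an obstacle — is bookkeeping in (A): being careful that the product-of-sums identity for nonnegative series is applied correctly and that the union in (C) may cause overlaps, which is harmless because we only need an upper bound and all terms are nonnegative. Everything else is elementary.
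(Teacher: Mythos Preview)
Your proof is correct and follows essentially the same approach as the paper: part (A) from the identity $(ab)^{-\alpha}=a^{-\alpha}b^{-\alpha}$, part (B) via the eventual geometric growth of $a_n$ giving a convergent geometric tail, and part (C) by induction on the level $i$ of $\MW_i$ using (A) and (B). If anything, you spell out the bookkeeping (Tonelli for nonnegative series, subadditivity over the union) more carefully than the paper does.
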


\begin{proof}

Proposition (A) simply follows from the fact that $a^{-\alpha}b^{-\alpha}=(ab)^{-\alpha}$.

Now, we prove proposition (B). Let $A=\{a_j;\ j\in\N\}\in\E_1$. Then there exist $\epsilon>0$, $j_0\in\N$ such that for every $j\geq j_0$ we have $a_{j+1}\geq a_j(1+\epsilon)$. Clearly,
$$\sum_{n\in A}n^{-\alpha}=\sum_{j=1}^{j_0-1}(a_j)^{-\alpha}+\sum_{j=j_0}^{\infty}(a_j)^{-\alpha}\leq\sum_{j=1}^{j_0-1}(a_j)^{-\alpha}+(a_{j_0})^{-\alpha}\sum_{n=0}^{\infty}(1+\epsilon)^{-n\alpha}=\sum_{j=1}^{j_0-1}(a_j)^{-\alpha}+\frac{(a_{j_0})^{-\alpha}}{1-(1+\epsilon)^{-\alpha}}<+\infty.$$

To prove proposition (C), we need to show that for every $k\in\N$ and every $A\in\MW_k$ we have
\begin{eqnarray}
\sum_{n\in A}n^{-\alpha}<+\infty.\label{abskonv}
\end{eqnarray}
We will prove this by induction. The case when $k=1$ immediately follows from (B). Assume that $A\in\MW_{k+1}$ and we already proved $(\ref{abskonv})$ for any $C\in\MW_k$. Then $A=B\cdot C\cup D$, where $B\in\MW_1$ and $C,D\in\MW_k$. By propositions (A) we simply obtain
$$\sum_{n\in A}n^{-\alpha}\leq\sum_{n\in B}n^{-\alpha}\sum_{n\in C}n^{-\alpha}+\sum_{n\in D}n^{-\alpha}<+\infty,$$
which proves $(\ref{abskonv})$.

\end{proof}

Following remark demonstrates some simple properties of the above defined notions.

\begin{remark} \label{help}
The following assertions hold:
\begin{itemize}

\item[(i)] $\IP\in\MP$.
\item[(ii)] Let $A\in\MP$ and $k\in\N$ then $A_{|_k}\in\MP$.
\item[(iii)] Let $A\subset\IP$ then $(-1)^{n+1}=1$ for any $n\in A$ and $\Phi(A,\alpha)=\sum_{n\in A}n^{-\alpha}$.
\item[(iv)] Let $\alpha\in(0,1]$ and $A,B,C\in\MW$, $A\subset\IP$ and $p\in\IP\setminus A$ be such that $B\cup C$ is constructed from $(\IP\setminus A,p)$. Then 
$$\Phi((A\cdot B)\cup C,\alpha)=\Phi(A,\alpha)\Phi(B,\alpha)+\Phi(C,\alpha).$$
\item[(v)] Let $A\in\MP$ and $k\in\N$. Then there exists $B,C\in\MP$ such that $B\cup C\subset A_{|_k}$ and $B\cap C=\emptyset$.
\item[(vi)] Let $\alpha\in(0,1]$ and $\tilde{B}\in\E_1$. Then $\{\Phi(C,\alpha);\ C\subset \tilde{B}\}=[0,\Phi(\tilde{B},\alpha)]$.
\item[(vii)] Let $x>0$, $\alpha\in(0,1]$ and $A\in\MP$. Then there exists $B\subset A$ such that $B\in\MW_1$ and $\Phi(B,\alpha)=x$.
\end{itemize}

\end{remark}

\begin{proof}

It is well known that $\sum_{p\in\IP}\frac{1}{p}=+\infty$, which was proved by Euler in \cite{Euler}. For $x>0$ we define 
\begin{eqnarray}\nonumber
f(x)&=&\card(\IP\cap[0,x]),\\
g(x)&=&\frac{f(x)\log(x)}{x}\nonumber.
\end{eqnarray}
Prime Number Theorem states that
\begin{eqnarray}\label{density}
\lim_{x\to\infty}g(x)=1. 
\end{eqnarray}
Now, we prove that for every $0<\epsilon<\delta\leq1$, there exists $n_0\in\N$ such that for every $n\geq n_0$ we have 
\begin{eqnarray}\label{Adelta}
\IP\cap[n(1+\epsilon),n(1+\delta)]\neq\emptyset.
\end{eqnarray}
Assume on the contrary that there exist $0<\epsilon<\delta\leq1$ for which there exists an increasing sequence $\{a_n;\ n\in\N\}$ of integers such that
$$f(a_n(1+\epsilon))=f(a_n(1+\delta)).$$
By $(\ref{density})$ we have
$$
1=\lim_{n\to\infty}\frac{g(a_n(1+\epsilon))}{g(a_n(1+\delta))}=\lim_{n\to\infty}\frac{f(a_n(1+\epsilon))\log(a_n(1+\epsilon))a_n(1+\delta)}{f(a_n(1+\delta))\log(a_n(1+\delta))a_n(1+\epsilon)}=\lim_{n\to\infty}\frac{\log(a_n(1+\epsilon))(1+\delta)}{\log(a_n(1+\delta))(1+\epsilon)}=\frac{1+\epsilon}{1+\delta},
$$
which is a contradiction. Let $\delta\in(0,1]$ be arbitrary. By $(\ref{Adelta})$ we can find $A_{\delta}\subset\IP$ such that $A_{\delta}\in\E_{\delta}$. So, we proved proposition (i).

Proposition (ii) is trivial.

Proposition (iii) is trivial.

Proposition (iv) simply follows from Lemma~\ref{base1}(A). We only need to show that $(A\cdot B)\cap C=\emptyset$ and the mapping $(a,b)\to a\cdot b$ is injective on $A\times B$. This immediately follows from the fact that any element of $B\cup C$ is not divisible by any element of $A$.

Now, we prove proposition (v). Let $A_{|_k}=\{a_n;\ n\in\N\}$. Put $B=\{a_{2n-1};\ n\in\N\}$ and $C=\{a_{2n};\ n\in\N\}$. Clearly 
$$\sum_{n\in B}\frac{1}{n}=\sum_{n\in C}\frac{1}{n}=+\infty.$$
Let $\delta\in(0,1]$. We need to find $B_{\delta}\subset B$ such that $B_{\delta}\in\E_{\delta}$. By the definition of $\MP$ there exists an increasing sequence of integers $\{n_k\}_{k=1}^{\infty}$ such that $\{a_{n_k};\ k\in\N\}\in\E_{\frac{\delta}{7}}$. Thus there exists $\epsilon>0$ and $k_0\in\N$ such that for every $k\geq k_0$ we have $a_{n_{k+1}}\geq(1+\epsilon)a_{n_k}$ and for every $k\in\N$ we have $a_{n_{k+1}}\leq(1+\frac{\delta}{7})a_{n_k}$. We define a set $B_{\delta}=\{x_i;\ i\in\N\}$ by
$$x_i=
\begin{cases}
a_{n_{2i}}:\ n_{2i}\text{ is odd,}\\
a_{n_{2i}-1}:\ n_{2i}\text{ is even.}
\end{cases}
$$
Clearly, $B_{\delta}\subset B$. So, it remains to be shown that $B_{\delta}\in\E_{\delta}$. Let $i\geq k_0$, then
$$x_{i+1}\geq a_{n_{2i+2}-1}\geq a_{n_{2i+1}}\geq(1+\epsilon)a_{n_{2i}}\geq(1+\epsilon)x_i.$$
Let $i\in\N$ be arbitrary. Then
$$x_{i+1}\leq a_{n_{2i+2}}\leq \left(1+\frac{\delta}{7}\right)^3a_{n_{2i-1}}\leq (1+\delta)a_{n_{2i-1}}\leq (1+\delta)a_{n_{2i}-1}\leq (1+\delta)x_i.$$
So, $B_{\delta}\in\E_{\delta}$. Similarly, we can find $C_{\delta}\subset C$ such that $C_{\delta}\in\E_{\delta}$.

Assume that $\tilde{B}=\{a_n;\ n\in\N\}$. Proposition (vi) follows from the fact that the terms $(a_n)^{-\alpha}$ tend to $0$ and $(a_{n+1})^{\alpha}\leq2(a_n)^{\alpha}$.

Finally, we prove proposition (vii). Since $A\in\MP$, we can find $C\subset A$ such that $C\in\E_1$. By Lemma~\ref{base1}(B), there exists $y>0$ such that $\Phi(C,\alpha)=y$. Since $A\in\MP$, we have $\Phi(A,1)=+\infty$. Thus
$$\Phi(A\setminus C,\alpha)=\Phi(A,\alpha)-\Phi(C,\alpha)\geq \Phi(A,1)-\Phi(C,\alpha)=+\infty-y=+\infty.$$
Since $n^{-\alpha}$ tends to $0$, we can find a finite set $D\subset (A\setminus C)$ such that $\Phi(D,\alpha)\in[x-y,x)$. Put $\tilde{B}=C\cup D$. Clearly $\tilde{B}\subset A$, $\tilde{B}\in\E_1$ and $\Phi(\tilde{B},\alpha)=\Phi(C,\alpha)+\Phi(D,\alpha)\geq x$. We use Proposition (vi) to find $B\subset\tilde{B}$ such that $\Phi(B,\alpha)=x$.

\end{proof}

In the following lemma, we prove a stronger version of Remark~\ref{help}(vii).

\begin{lemma}\label{Pom1}

Let $x,y,z>0$, $\tilde{A}\in\MP$, $0<\alpha<\beta<\gamma$ and $\beta\leq1$. Then there exists $A\subset\tilde{A}$ such that

\begin{eqnarray}
&A&\in\MW_1,\nonumber\\
&\Phi(A,\alpha)&>z,\nonumber\\
&\Phi(A,\beta)&=x,\label{L1,2}\nonumber\\
&\Phi(A,\gamma)&<y.\label{L1,3}\nonumber
\end{eqnarray}

\end{lemma}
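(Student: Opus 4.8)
The plan is to build the set $A$ as a disjoint union of two pieces: a ``coarse'' piece $C \in \E_1$ coming from the lacunary subset guaranteed by $\tilde A \in \MP$, which will be used to push $\Phi(\cdot,\alpha)$ above $z$ while contributing almost nothing to $\Phi(\cdot,\beta)$ and $\Phi(\cdot,\gamma)$; and a ``fine'' piece $D$, a finite set of large integers from $\tilde A \setminus C$, which will be used to fine-tune $\Phi(\cdot,\beta)$ to hit the target $x$ exactly. The key point making this work is the separation of exponents $\alpha < \beta < \gamma$: for a set of integers bounded below by some large $N$, we have $\Phi(\cdot,\gamma) \le N^{-(\gamma-\beta)}\Phi(\cdot,\beta)$ and $\Phi(\cdot,\beta) \le N^{-(\beta-\alpha)}\Phi(\cdot,\alpha)$, so large integers are ``cheap'' for the higher exponents. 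Since the final set $A = C \cup D$ is a finite union of a set in $\E_1$ and a finite set, and finite sets trivially lie in $\MW_1$ (indeed in $\E_1$ after one checks the ratio and lacunarity conditions, or by absorbing $D$ into the lacunary tail), membership $A \in \MW_1$ follows from the definition of $\MW_1$ as all subsets of sets in $\E_1$.

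More precisely, I would first fix a large threshold $N$ and pass to the tail $\tilde A_{|_N} \in \MP$ (Remark~\ref{help}(ii)), so that every index we ever use exceeds $N$. By the definition of $\MP$ there is $C_0 \subset \tilde A_{|_N}$ with $C_0 \in \E_1$; since $\Phi(C_0,\alpha)$ can be made large --- one can enlarge $C_0$ within $\tilde A_{|_N}$ using $\Phi(\tilde A_{|_N},1)=\infty$ exactly as in the proof of Remark~\ref{help}(vii), keeping it in $\E_1$ --- we arrange $\Phi(C_0,\alpha) > z + 1$. Because all elements of $C_0$ exceed $N$, we get $\Phi(C_0,\beta) \le N^{-(\beta-\alpha)}\Phi(C_0,\alpha)$, which is small if $N$ is large; but here I must be careful, because $\Phi(C_0,\alpha)$ itself grows as I enlarge $C_0$. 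The clean way is: first choose $C_0 \in \E_1$ inside $\tilde A_{|_N}$ with $\Phi(C_0,\alpha) \in (z, z+1)$ using Remark~\ref{help}(vii) applied to $\tilde A_{|_N}$ (noting $\alpha \le \beta \le 1$ so the hypothesis of (vii) is met) --- wait, (vii) gives equality $\Phi(B,\alpha)=x$ for any prescribed $x$, so take $\Phi(C_0,\alpha)$ equal to, say, $z+1$. Then $\Phi(C_0,\beta) \le N^{-(\beta-\alpha)}(z+1)$ and $\Phi(C_0,\gamma) \le N^{-(\gamma-\alpha)}(z+1)$, both $< \min(x,y)/2$ once $N$ is large enough. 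This fixes $C := C_0$.

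Next I fine-tune with $D$. We need to add a finite set $D \subset (\tilde A_{|_N} \setminus C)$ with $\Phi(D,\beta) = x - \Phi(C,\beta)$, a positive target since $\Phi(C,\beta) < x/2 < x$. As in Remark~\ref{help}(vii), $\Phi(\tilde A_{|_N}\setminus C, \beta) = +\infty$ (using $\Phi(\tilde A,1)=\infty$, $\beta \le 1$, and that removing $C \in \E_1$ removes only a finite amount), and the terms $n^{-\beta}$ go to $0$, so by a greedy argument we can pick a finite $D$ with $\Phi(D,\beta) \in [x - \Phi(C,\beta) - \eta,\ x - \Phi(C,\beta))$ for any $\eta > 0$; but to get \emph{exact} equality I instead invoke Remark~\ref{help}(vii) directly on $\tilde A_{|_N}\setminus C \in \MP$ (this is in $\MP$ because $C$, being in $\E_1$, is removable without destroying the $\MP$-structure --- one should check $\tilde A_{|_N}\setminus C \in \MP$, which follows since $\sum 1/n$ over it is still infinite and its required lacunary subsets can be extracted disjointly from those of $C$, cf.\ Remark~\ref{help}(v)) to obtain $D \subset \tilde A_{|_N}\setminus C$ with $D \in \MW_1$ and $\Phi(D,\beta) = x - \Phi(C,\beta)$. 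Then $A := C \cup D$ is disjoint, so $\Phi(A,\beta) = \Phi(C,\beta) + \Phi(D,\beta) = x$. For the other two estimates: $\Phi(A,\alpha) \ge \Phi(C,\alpha) = z+1 > z$; and $\Phi(A,\gamma) = \Phi(C,\gamma) + \Phi(D,\gamma) \le \Phi(C,\gamma) + N^{-(\gamma-\beta)}\Phi(D,\beta) < y/2 + N^{-(\gamma-\beta)}x < y$ for $N$ large. Finally $A \in \MW_1$: $C \in \E_1 \subset \MW_1$ and $D \in \MW_1$, and a disjoint union $C \cup D$ with $C \in \E_1$ and $D$ finite lies inside a single set of $\E_1$ (adjoin the finitely many elements of $D$ to $C$; the ratio-$\le 2$ and eventual-lacunarity conditions are unaffected by finitely many insertions), hence $A \in \MW_1$.

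The main obstacle is the bookkeeping around choosing $N$ \emph{before} fixing $C$ and $D$ while the quantities $\Phi(C,\alpha)$ etc.\ depend on those choices: one must order the quantifiers so that $N$ is picked large relative to $x,y,z$ (and relative to the fixed value $z+1$ we force for $\Phi(C,\alpha)$), \emph{then} $C$ and $D$ are extracted from the tail $\tilde A_{|_N}$, so that the decay factors $N^{-(\gamma-\beta)}$, $N^{-(\beta-\alpha)}$ genuinely dominate. A secondary technical point is verifying the membership claims $\tilde A_{|_N}\setminus C \in \MP$ and $C \cup D \in \MW_1$, but both are routine given Remark~\ref{help}(ii),(v) and the definition of $\E_1$.
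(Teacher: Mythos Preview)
Your two-piece construction is more complicated than needed and has a genuine gap at the final step, namely the membership $A=C\cup D\in\MW_1$. After you switch to ``the clean way'' and invoke Remark~\ref{help}(vii), the set $C$ you obtain is only in $\MW_1$ (a \emph{subset} of some $\E_1$ set), not in $\E_1$ itself; and the set $D$ you obtain from a second application of (vii) is likewise an infinite $\MW_1$ set, not a finite set. Yet in the last sentence you argue as if $C\in\E_1$ and $D$ is finite. Neither holds, and in general the union of two $\MW_1$ sets need not be in $\MW_1$: if $C\subset E_C\in\E_1$ and $D\subset E_D\in\E_1$, the interleaving $E_C\cup E_D$ can destroy the eventual lacunarity required by $\E_1$ (think of $\{2^n\}\cup\{2^n+1\}$), so there is no obvious $\E_1$ set containing $C\cup D$. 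Since the lemma requires $A\in\MW_1$ (and this is genuinely used downstream to keep $W^{\pm}_{i+1}\in\MW_{i+1}$ in Lemma~\ref{Main}), this is not a cosmetic issue. A secondary loose end is the claim $\tilde A_{|_N}\setminus C\in\MP$: the divergence of $\sum 1/n$ survives, but producing $\E_\delta$-subsets disjoint from the already-chosen $C$ is not handled by a bare pointer to Remark~\ref{help}(v), because $C$ was not chosen as one half of a (v)-split.

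For comparison, the paper's proof avoids all of this by using Remark~\ref{help}(vii) \emph{once}, with the middle exponent $\beta$: for each $k$ pick $C_k\subset\tilde A_{|_k}$ with $C_k\in\MW_1$ and $\Phi(C_k,\beta)=x$; then since every $n\in C_k$ satisfies $n>k$, one has $\Phi(C_k,\alpha)\ge xk^{\beta-\alpha}$ and $\Phi(C_k,\gamma)\le xk^{\beta-\gamma}$, and a large enough $k$ finishes. No union is taken, so $\MW_1$ is automatic. Your idea of passing to a tail and using the comparison $n^{-\gamma}\le N^{-(\gamma-\beta)}n^{-\beta}$ is exactly the right mechanism; you just applied (vii) at the wrong exponent and then had to glue two pieces together.
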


\begin{proof}

Let $k\in\N$ be arbitrary. By Remark~\ref{help}(ii) we have $\tilde{A}_{|_k}\in\MP$. By Remark~\ref{help}(vii), there exist $C_k\subset\tilde{A}_{|_k}$ such that $C_k\in\MW_1$ and $\Phi(C_k,\beta)=x$. Clearly,
\begin{eqnarray}
\Phi(C_k,\alpha)=\sum_{n\in C_k}n^{-\alpha}=\sum_{n\in C_k}n^{-\beta}n^{-\alpha+\beta}\geq\sum_{n\in C_k}n^{-\beta}k^{-\alpha+\beta}=\Phi(C_k,\beta)k^{-\alpha+\beta}=xk^{-\alpha+\beta},\nonumber\\
\Phi(C_k,\gamma)=\sum_{n\in C_k}n^{-\gamma}=\sum_{n\in C_k}n^{-\beta}n^{-\gamma+\beta}\leq\sum_{n\in C_k}n^{-\beta}k^{-\gamma+\beta}=\Phi(C_k,\beta)k^{-\gamma+\beta}=xk^{-\gamma+\beta}.\nonumber
\end{eqnarray}
To finish the proof we only need to find $k\in\N$ such that $xk^{-\alpha+\beta}>z$ and $xk^{-\gamma+\beta}<y$ and set $A=C_k$.

\end{proof}

The following lemma helps us in doing an inductive step in the proof of Lemma~\ref{Main}.

\begin{lemma}\label{inductive lemma}

Let $d\in\N$, $l\in\{1,\dots,d\}$, $\bx=(x^1,\dots,x^d), \by=(y^1,\dots,y^d)\in \MO^d$, $0<\alpha_1<\dots<\alpha_d\leq1$ and $M\in\MP$. Then there exists $A,B,C\subset\N$ and $\bz_1=(z_1^1,\dots,z_1^d),\bz_2=(z_2^1,\dots,z_2^d)\in\R^d$ such that
\begin{itemize}
\item[(a)] $C\in\MP$,
\item[(b)] $A\cup B\cup C\subset M$,
\item[(c)] $A,B,C$ are pairwise disjoint,
\item[(d)] $A,B\in\MW_1$,
\item[(e)] $(\bz_1,\bz_2)\in\MO^d$,
\item[(f)] $z_1^i=0$ if and only if $(x^i=0$ or $i=l)$,
\item[(g)] 
$$\bz_1=<\Psi(A,(\alpha_1,\dots,\alpha_d)),\bx>+\by,$$ 
$$\bz_2=<\Psi(B,(\alpha_1,\dots,\alpha_d)),\by>+\bx.$$
\end{itemize}
\end{lemma}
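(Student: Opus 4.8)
The goal is, given the target vectors $\bx,\by\in\MO^d$ (which means componentwise $x^iy^i<0$ or both are zero) and a fixed coordinate $l$, to produce two disjoint sets $A,B\in\MW_1$ inside a prescribed set $M\in\MP$, together with a ``leftover'' set $C\in\MP$, so that the two output vectors $\bz_1,\bz_2$ defined by (g) again lie in $\MO^d$ and $\bz_1$ has exactly one forced zero coordinate, namely at $i=l$ (in addition to the coordinates where $x^i$ was already zero). The whole point is that applying $\Psi(A,\cdot)$ coordinatewise rescales the $d$ target numbers by the $d$ exponents $\alpha_1<\dots<\alpha_d$ \emph{simultaneously}, and Lemma~\ref{Pom1} is precisely the tool that lets us prescribe one of these rescaled coordinates while controlling (from above / from below) two others.

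\textbf{Step 1: split off enough primes.} First I would use Remark~\ref{help}(v) repeatedly (or an obvious iteration of it) to carve $M$ into pairwise disjoint pieces $M_A, M_B, C$, all in $\MP$, with $M_A\cup M_B\cup C\subset M$. The set $C$ is the one reported in the lemma; $A$ will be chosen inside $M_A$ and $B$ inside $M_B$, which automatically gives disjointness (c) and containment (b). Since subsets of elements of $\MP$ that still lie in $\MW_1$ are allowed by Lemma~\ref{Pom1}, and (a) holds by construction, the structural conditions (a)–(d) will essentially be free once the analytic choice of $A,B$ is made.

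\textbf{Step 2: choose $A$.} I want $\bz_1 = \langle\Psi(A,\ba),\bx\rangle + \by$ to have zero in coordinate $l$ and the correct sign (opposite to $\bz_2$'s coordinate) elsewhere. Coordinate $l$ of $\bz_1$ is $\Phi(A,\alpha_l)x^l + y^l$. Since $x^l,y^l$ have opposite signs, $-y^l/x^l>0$, so I need $\Phi(A,\alpha_l) = -y^l/x^l =: x_0 >0$. I apply Lemma~\ref{Pom1} inside $M_A$ with $\beta := \alpha_l$, $\alpha := \alpha_1$, $\gamma := \alpha_d$ (if $l=1$ or $l=d$ a trivially modified version of the lemma, or just the one-sided estimates in its proof, suffices), prescribing $\Phi(A,\alpha_l)=x_0$ while making $\Phi(A,\alpha_1)$ as large as I please and $\Phi(A,\alpha_d)$ as small as I please. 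Because $\alpha_1<\dots<\alpha_d$, the monotonicity $n^{-\alpha_i}\ge n^{-\alpha_l}k^{-\alpha_i+\alpha_l}$ (for $i<l$) and $n^{-\alpha_i}\le n^{-\alpha_l}k^{-\alpha_i+\alpha_l}$ (for $i>l$) — exactly the computation in the proof of Lemma~\ref{Pom1} — pins down all the intermediate coordinates $\Phi(A,\alpha_i)$ within explicit bounds; choosing the free parameters $z,y$ in Lemma~\ref{Pom1} appropriately forces each $\Phi(A,\alpha_i)x^i$ ($i\ne l$, $x^i\ne0$) to dominate $|y^i|$, so that $z_1^i$ inherits the sign of $x^i$, hence is nonzero and has sign opposite to that of $\bz_2$'s $i$-th coordinate (which will have the sign of $y^i$ by the analogous reasoning in Step 3). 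Coordinates with $x^i=y^i=0$ stay zero, as required by (f).

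\textbf{Step 3: choose $B$ and check $(\bz_1,\bz_2)\in\MO^d$.} Symmetrically, $\bz_2 = \langle\Psi(B,\ba),\by\rangle+\bx$ has $i$-th coordinate $\Phi(B,\alpha_i)y^i + x^i$. Now I want $z_2^i$ to have the sign of $y^i$ whenever $y^i\ne0$, i.e. I want $\Phi(B,\alpha_i)|y^i|$ to beat $|x^i|$ for every relevant $i$ — including $i=l$, where there is no zero constraint on $\bz_2$. Again by Lemma~\ref{Pom1} applied in $M_B$ (prescribing, say, $\Phi(B,\alpha_1)$ and bounding the rest, or prescribing $\Phi(B,\alpha_d)$ from below — any coordinate works since here I only need largeness, not an equality), the monotone-exponent estimate makes all $\Phi(B,\alpha_i)$ simultaneously large, so every nonzero coordinate of $\bz_2$ gets the sign of $y^i$. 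Since for each $i$ either $x^i=y^i=0$ (then $z_1^i=z_2^i=0$) or $x^i,y^i$ have opposite signs (then $z_1^i$ has the sign of $x^i$, except $z_1^l=0$, and $z_2^i$ has the sign of $y^i$, so $z_1^iz_2^i<0$ when $i\ne l$; and when $i=l$, $z_1^l=0$ while $z_2^l$ has the sign of $y^l\ne0$) — wait, that last case needs $z_1^l=z_2^l=0$ for membership in $\MO^d$. So at coordinate $l$ I must \emph{also} kill $\bz_2$: I therefore additionally require $\Phi(B,\alpha_l)=-x^l/y^l>0$, prescribing this via Lemma~\ref{Pom1} (using $\beta=\alpha_l$ as the prescribed exponent for $B$ as well), exactly as I did for $A$ in Step 2. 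Then $z_1^l=z_2^l=0$, and (e), (f), (g) all hold.

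\textbf{Main obstacle.} The delicate point is the simultaneous control in Steps 2–3: I must prescribe \emph{one} value $\Phi(\cdot,\alpha_l)$ exactly while forcing the \emph{other} $d-1$ coordinates past absolute thresholds $|y^i|$ (resp. $|x^i|$) in the correct direction, and this has to work for \emph{all} $i$ at once. Lemma~\ref{Pom1} is stated with a single $\alpha$ below and a single $\gamma$ above $\beta$, so strictly one needs either to invoke it with $\alpha=\alpha_1,\gamma=\alpha_d$ and then re-derive the intermediate bounds from its proof (the chain $\Phi(C_k,\alpha_i)\in[xk^{-\alpha_i+\alpha_l},\infty)$ for $i<l$, $\in(0,xk^{-\alpha_i+\alpha_l}]$ for $i>l$, with $k\to\infty$), or to run a short induction peeling off one exponent at a time. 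Handling the boundary subcases $l=1$ (no exponents below $\alpha_l$) and $l=d$ (none above) is routine but must be mentioned; and one must double-check the edge case where some $x^i=y^i=0$, where nothing needs to be done in that coordinate because $\Psi$ acts coordinatewise and $0$ is preserved.
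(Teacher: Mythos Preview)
Your overall architecture---split $M$ into three pairwise disjoint $\MP$-pieces via Remark~\ref{help}(v), then build $A$ and $B$ separately inside two of them using Lemma~\ref{Pom1} with prescribed value at exponent $\alpha_l$---matches the paper's, and you correctly catch mid-Step~3 that $z_2^l$ must also be killed. But Step~2 contains a genuine error that breaks the argument.

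You assert that by tuning the parameters in Lemma~\ref{Pom1} you can force $\Phi(A,\alpha_i)|x^i|>|y^i|$ for \emph{every} $i\ne l$ with $x^i\ne 0$, so that $z_1^i$ inherits the sign of $x^i$. This is impossible in general. Since $A\subset\IP$, the map $\alpha\mapsto\Phi(A,\alpha)=\sum_{n\in A}n^{-\alpha}$ is strictly decreasing; in particular for $i>l$ one always has $\Phi(A,\alpha_i)<\Phi(A,\alpha_l)=|y^l|/|x^l|$, and your own bound $\Phi(C_k,\alpha_i)\le x_0\,k^{\alpha_l-\alpha_i}\to 0$ from the ``Main obstacle'' paragraph says even more. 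So for $i>l$ you \emph{cannot} push $\Phi(A,\alpha_i)$ above the fixed threshold $|y^i|/|x^i|$ unless the data happen to satisfy $|y^l|/|x^l|>|y^i|/|x^i|$, which is nowhere assumed. The same defect recurs symmetrically for $B$ once you impose $\Phi(B,\alpha_l)=-x^l/y^l$.

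The remedy---and this is exactly what the paper does---is \emph{not} to aim for $z_1^i$ having the sign of $x^i$ on all coordinates. Instead, prescribe $\Phi(A,\alpha_l)=-y^l/x^l$ and use Lemma~\ref{Pom1} to make $\Phi(A,\alpha_{l-1})$ large enough that $z_1^i$ has the sign of $x^i$ for $i<l$, while making $\Phi(A,\alpha_{l+1})$ \emph{small} enough that $z_1^i$ has the sign of $y^i$ for $i>l$. Do the symmetric construction for $B$ with $\bx$ and $\by$ interchanged, so that $z_2^i$ has the sign of $y^i$ for $i<l$ and the sign of $x^i$ for $i>l$. In either regime $z_1^iz_2^i<0$ whenever $x^i\ne 0$ and $i\ne l$, giving $(\bz_1,\bz_2)\in\MO^d$ and condition~(f). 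You should also treat the edge case $x^l=0$ (hence $y^l=0$) separately by taking $A=B=\emptyset$, $C=M$.
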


\begin{proof}

If $x^l=0$, then we can set $A=B=\emptyset$, $C= M$ satisfying (a)-(d). We define $\bz_1$ and $\bz_2$ by (g). Thus, we simply obtain (e), (f).

Assume $x^l\neq0$. Since $(\bx,\by)\in\MO^d$ we have $-\frac{y^l}{x^l}>0$. By Remark~\ref{help}(v), we can find some pairwise disjoint sets $\tilde{A},\tilde{B},C\subset M$ such that $\tilde{A},\tilde{B},C\in\MP$. Thus $(a)$ is satisfied. Now, we use Lemma~\ref{Pom1} to find the set $A$. We set constants in Lemma~\ref{Pom1} in the following way. For $1<l<d$ set $x=-\frac{y^l}{x^l}$, $z=\frac{\max\{|y^i|;\ i\in\{1,\dots,l-1\}\wedge y^i\neq0\}}{\min\{|x^i|;\ i\in\{1,\dots,l-1\}\wedge x^i\neq0\}}$, $y=\frac{\min\{|y^i|;\ i\in\{l+1,\dots,d\}\wedge y^i\neq0\}}{\max\{|x^i|;\ i\in\{l+1,\dots,d\}\wedge x^i\neq0\}}$, $\alpha=\alpha_{l-1}$ $\beta=\alpha_l$, $\gamma=\alpha_{l+1}$. We set $\max\{\emptyset\}=\min\{\emptyset\}=1$. For $l\in\{1,d\}$ we set them analogously but in case $l=1$ we set $z=1$ and $\alpha=\frac{\alpha_1}{2}$ and in case $l=d$ we set $y=1$ and $\gamma=2$. Analogously, we find $B\subset\tilde{B}$, we only interchange $\bx$ and $\by$. 

Since $A,B\in\MW_1$ and Lemma~\ref{base1}(C) we can define $\bz_1$ and $\bz_2$ by (g).

Since $A\subset\tilde{A}$ and $B\subset\tilde{B}$ we have $(b)$ and $(c)$. We already showed (d) in the previous paragraph. 

Let $j\in\{1,\dots,d\}$ be arbitrary. 

If $x^j=0$ then clearly $z^j_1=z^j_2=0$.

Assume $j=l$. Then 
$$z_1^l=\Phi(A,\alpha_l)x_l+y_l=-\frac{y_l}{x_l}x_l+y_l=0.$$
Similarly $z_2^l=0$.

Now assume that $1\leq j<l$ and $x^j\neq0$. Thus 
$$\Phi(A,\alpha_j)\geq \Phi(A,\alpha_{l-1})>\frac{\max\{|y^i|;\ i\in\{1,\dots,l-1\}\wedge y^i\neq0\}}{\min\{|x^i|;\ i\in\{1,\dots,l-1\}\wedge x^i\neq0\}}.$$
So 
$$\Phi(A,\alpha_j)|x^j|>|y^j|.$$
Obviously, $z_1^j=\Phi(A,\alpha_j)x^j+y^j$ has the same sign as $x^j$. Similarly $z_2^j$ has the same sign as $y^j$. Since $x^jy^j<0$ we have $z_1^jz_2^j<0$.

Finally, let $l<j\leq d$ and $x^j\neq0$. Thus
$$\Phi(A,\alpha_j)\leq \Phi(A,\alpha_{l+1})<\frac{\min\{|y^i|;\ i\in\{l+1,\dots,d\}\wedge y^i\neq0\}}{\max\{|x^i|;\ i\in\{l+1,\dots,d\}\wedge x^i\neq0\}}.$$
So 
$$\Phi(A,\alpha_j)|x^j|<|y^j|.$$
Clearly $z_1^j=\Phi(A,\alpha_j)x^j+y^j$ has the same sign as $y^j$. Similarly $z_2^j$ has the same sign as $x^j$. Since $x^jy^j<0$ we have $z_1^jz_2^j<0$. Thus we proved $(e)$, $(f)$ .
\end{proof}

\begin{lemma}\label{Main}

Let $d\in\N$, $k\in\{1,\dots,d\}$, $0<\alpha_1<\dots<\alpha_d\leq1$, $V\in\MP$, $p\in V$ and $x\in\R$. Then there exists $W\in\MW$ such that $W$ is constructed from $(V,p)$ and
\begin{eqnarray}\nonumber
\Phi(W,\alpha_i)=
\begin{cases}
0;\ i\neq k,\\
x;\ i=k.
\end{cases}
\end{eqnarray}
\end{lemma}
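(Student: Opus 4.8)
The plan is to annihilate the $d-1$ unwanted coordinates one at a time by repeated application of Lemma~\ref{inductive lemma}, and then to reach the exact value $x$ by a single rescaling based on Remark~\ref{help}(vii). If $x=0$ we are done with $W=\emptyset\in\MW_1$ (which is vacuously constructed from $(V,p)$), so assume $x\neq 0$. Write $\{1,\dots,d\}\setminus\{k\}=\{i_1<\dots<i_{d-1}\}$. By Remark~\ref{help}(ii) we have $V_{|_p}\in\MP$, and iterating Remark~\ref{help}(v) we fix pairwise disjoint sets $M_1,\dots,M_{d-1},N\in\MP$ contained in $V_{|_p}$ together with two further primes $p_1,p_2\in V_{|_p}$ lying in none of them (possible since every member of $\MP$ is an infinite set of odd primes). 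Put $P_0=\{pp_1\}$ and $Q_0=\{2pp_2\}$: these are disjoint singletons in $\MW_1$, both constructed from $(V,p)$ since $p,p_1,p_2\in V$ and $p\mid pp_1$, $p\mid 2pp_2$; moreover $\bx_0:=\Psi(P_0,\ba)$ has all coordinates positive and $\by_0:=\Psi(Q_0,\ba)$ has all coordinates negative, so $(\bx_0,\by_0)\in\MO^d$ with no zero coordinate.

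Now I run an induction over $t=0,1,\dots,d-1$, maintaining disjoint sets $P_t,Q_t\in\MW$, both constructed from $(V,p)$, using no prime from $M_{t+1}\cup\dots\cup M_{d-1}\cup N$, with $(\Psi(P_t,\ba),\Psi(Q_t,\ba))\in\MO^d$ and with common zero coordinate set exactly $\{i_1,\dots,i_t\}$; the base case $t=0$ is the previous paragraph. For the step, apply Lemma~\ref{inductive lemma} with $l=i_{t+1}$, with $\bx=\Psi(P_t,\ba)$, $\by=\Psi(Q_t,\ba)$ and with $M=M_{t+1}$, obtaining pairwise disjoint $A,B\in\MW_1$ and $C\in\MP$ inside $M_{t+1}$ and vectors $\bz_1,\bz_2$ as in items (e)--(g). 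Define
$$P_{t+1}=(A\cdot P_t)\cup Q_t,\qquad Q_{t+1}=(B\cdot Q_t)\cup P_t.$$
Since $A,B\subset M_{t+1}\subset\IP$ consist of primes dividing no element of $P_t\cup Q_t$, while every element of $P_t\cup Q_t$ is divisible by $p\notin A\cup B$, Remark~\ref{help}(iv) applies to each exponent $\alpha_i$ and, combined with Lemma~\ref{inductive lemma}(g), yields $\Psi(P_{t+1},\ba)=\bz_1$ and $\Psi(Q_{t+1},\ba)=\bz_2$. Membership in $\MW$ and the ``constructed from $(V,p)$'' property are immediate from the definitions, and $P_{t+1}\cap Q_{t+1}=\emptyset$ because an element of $A\cdot P_t$ (resp.\ $B\cdot Q_t$) is divisible by exactly one prime of $M_{t+1}$, namely its factor in $A$ (resp.\ $B$), whereas no element of $P_t\cup Q_t$ is. Finally $(\bz_1,\bz_2)\in\MO^d$ by Lemma~\ref{inductive lemma}(e), and by (f) the common zero set of $\bz_1,\bz_2$ is $\{i_1,\dots,i_t\}\cup\{i_{t+1}\}$, so the invariant holds at stage $t+1$.

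Set $P=P_{d-1}$ and $Q=Q_{d-1}$. Every coordinate of $\Psi(P,\ba)$ and of $\Psi(Q,\ba)$ vanishes except the $k$-th, and membership in $\MO^d$ forces $\Phi(P,\alpha_k)$ and $\Phi(Q,\alpha_k)$ to be nonzero with opposite signs; let $P^{\ast}$ be whichever of $P,Q$ satisfies that $c^{\ast}:=\Phi(P^{\ast},\alpha_k)$ has the same sign as $x$. Using Remark~\ref{help}(vii), choose $R\subset N$ with $R\in\MW_1$ and $\Phi(R,\alpha_k)=x/c^{\ast}>0$, and set $W=R\cdot P^{\ast}=(R\cdot P^{\ast})\cup\emptyset$. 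Then $W\in\MW$, $W$ is constructed from $(V,p)$ (each element is a prime of $N\subset V$ times an element of $P^{\ast}$, hence divisible by $p$ with all prime factors in $V$), and since $R\subset\IP$ avoids the primes of $P^{\ast}$ and $p\notin R$, Remark~\ref{help}(iv) gives $\Phi(W,\alpha_i)=\Phi(R,\alpha_i)\Phi(P^{\ast},\alpha_i)$, which equals $0$ for $i\neq k$ and $(x/c^{\ast})c^{\ast}=x$ for $i=k$. Hence $W$ has all the required properties.

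The point that needs care is not a single estimate but the bookkeeping with primes: all the auxiliary sets must be supported on pairwise disjoint blocks of odd primes, all avoiding $p$, so that every product map appearing is injective and every appeal to Remark~\ref{help}(iv) has its divisibility hypotheses met; and one must carry through the iteration both the sign pattern encoded by $\MO^d$ and the precise annihilated coordinate set supplied by Lemma~\ref{inductive lemma}(e),(f). The concluding rescaling is exactly what turns the sign-level control provided by Lemma~\ref{inductive lemma} into the prescribed real value $x$.
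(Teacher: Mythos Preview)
Your argument is correct and follows essentially the same strategy as the paper's proof: iterate Lemma~\ref{inductive lemma} $d-1$ times to annihilate the unwanted coordinates while tracking the $\MO^d$ sign pattern, then rescale via Remark~\ref{help}(vii). The only cosmetic differences are that you pre-allocate the prime blocks $M_1,\dots,M_{d-1},N$ in advance (ignoring the set $C$ returned by Lemma~\ref{inductive lemma}) whereas the paper feeds $C$ forward as the next $M_{i+1}$, and that you start from $\{pp_1\},\{2pp_2\}$ instead of the paper's simpler $\{p\},\{2p\}$; neither affects the substance of the proof.
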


\begin{proof}

If $x=0$ then put $W=\emptyset$. 

If $d=1$ then this lemma simply follows from Remark~\ref{help}(vii). If $x>0$, then we find $\tilde{W}\subset V$ such that $\tilde{W}\in\MW$ and $\Phi(W,\alpha_1)=xp^{\alpha}$. Then we set $W=p\cdot\tilde{W}$. If $x<0$, then we find $\tilde{W}\subset V$ such that $\tilde{W}\in\MW$ and $\Phi(W,\alpha_1)=|x|(2p)^{\alpha}$. Then we set $W=(2p)\cdot\tilde{W}$.

Assume that $x\neq0$ and $d\geq 2$. For $i\in\{1,\dots,d\}$ we inductively construct $\bx_i=(x_i^1,\dots,x_i^d),\by_i=(y_i^1,\dots,y_i^d)\in\R^d$, $l_i\in\N$, $M_i\in\MP$ and $W_i^+,W_i^-\in\MW_i$ satisfying 
\begin{itemize}

\item[(1)] $(\bx_i,\by_i)\in\MO^d$, $i\in\{1,\dots,d\}$,
\item[(2)] $x^j_i=0$ if and only if $j\in\{l_1,\dots,l_{i-1}\}$, $i\in\{2,\dots,d\}$,
\item[(3)] $l_i\in\{1,\dots,d\}\setminus(\{k\}\cup\bigcup_{1\leq j<i}\{l_j\})$, $i\in\{1,\dots,d-1\}$,
\item[(4)] $\Phi(W^+_i,\ba)=x_i$ and $\Phi(W^-_i,\ba)=y_i$, $i\in\{1,\dots,d\}$,
\item[(5)] $W^+_i\cup W^-_i$ is constructed from $(V\setminus M_i,p)$, $i\in\{1,\dots,d\}$,
\item[(6)] $M_i\subset V\setminus\{p\}$, $i\in\{1,\dots,d\}$.

\end{itemize}

For $i=1$ we put $W^+_1=\{p\}$, $W^-_1=\{2p\}$ and $M_1=V\setminus\{p\}$. Clearly, $W^+_1,W^-_1\in\MW_1$, $M_1\in\MP$ and the conditions (5) and (6) are satisfied. Then we define $\bx_1$, $\by_1$ by (4). Thus $x_1^j>0$ and $y_1^j<0$ for every $j\in\{1,\dots,d\}$. So, (1) is also satisfied. Finally put $l_1\in\{1,\dots,d\}\setminus\{k\}$ arbitrarily and (3) is also satisfied.

Assume $i<d$ and $\bx_i,\by_i\in\R^d$, $l_i\in\N$, $M_i\in\MP$ and $W_i^+,W_i^-\in\MW_i$ have already been constructed and satisfy (1)-(6). We use $l_i$, $\bx_i$, $\by_i$ and $M_i$ in Lemma~\ref{inductive lemma} and obtain $A,B,C\subset \N$ and $\bz_1,\bz_2\in\R^d$ such that (a)-(g) are satisfied. We put $M_{i+1}=C$. Thus $M_{i+1}\in\MP$ and $M_{i+1}\subset M_i\subset V\setminus\{p\}$. So, (6) is satisfied. We set $W^+_{i+1}=(A\cdot W^+_i)\cup W^-_i$ and $W^-_{i+1}=(B\cdot W^-_i)\cup W^+_i$. Thus $W^+_{i+1},W^-_{i+1}\in\MW_{i+1}$. Since every element of $W^+_i\cup W^-_i$ is divisible by $p$, we obtain that every element of $W^+_{i+1}\cup W^-_{i+1}$ is also divisible by $p$. Since every element of $W^+_i\cup W^-_i$ is not divisible by any element of $M_i$, $M_{i+1}\subset M_i$ and every element of $A\cup B$ is not divisible by any element of $M_{i+1}$ we have that every element of $W^+_{i+1}\cup W^-_{i+1}$ is not divisible by any element of $M_{i+1}$. Thus we have (5). Put $\bx_{i+1}=\bz_1$ and $\by_{i+1}=\bz_2$. Since $W^+_i\cup W^-_i$ is constructed from $(V\setminus M_i,p)$, Remark~\ref{help}(iv) and Lemma~\ref{inductive lemma}(b),(g) we obtain (4). Conditions (1),(2) immediately follow from Lemma~\ref{inductive lemma}(e),(f). If $i+1<d$ we choose some $l_{i+1}$, which satisfies (3). Otherwise, we put $l_d=k$. So the construction is finished.

Thus, we constructed $\bx_d,\by_d\in\R^d$, $M_d\in\MP$ and $W_d^+,W_d^-\in\MW_i$ satisfying (1),(2),(4),(5),(6). Without loss of generality, we can assume that the sign of $x$ is the same as the sign of $x_d^k$. By Remark~\ref{help}(vii), we can find set $K\subset M_d$ such that $\Phi(K,\alpha_k)=\frac{x}{x_d^k}$ and $K\in\MW_1$. Set $W=K\cdot W^+_d$. By (5) and (6) we obtain that $W$ is constructed from $(V,p)$. Clearly $W\in\MW_{d+1}\subset\MW$. By (2),(4), (5) and Remark~\ref{help}(iv) we obtain 
\begin{eqnarray}\nonumber
\Phi(W,\alpha_i)=
\begin{cases}
0;\ i\neq k,\\
x;\ i=k,
\end{cases}
\end{eqnarray}
and the proof is finished.

\end{proof}

\begin{theorem}

Let $d\in\N$, $\alpha_1,\dots,\alpha_d\in(0,1]$ and $a_1,\dots,a_d\in\R\setminus\{0\}$. Then 
$$\on{A}_{\on{abs}}(a_1(-1)^{n+1}n^{-\alpha_1},\dots,a_d(-1)^{n+1}n^{-\alpha_d})=\R^d$$
if and only if $\alpha_i,\ i=1,\dots,d$ are pairwise distinct numbers.

\end{theorem}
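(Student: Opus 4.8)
The proof naturally splits into the two implications, and the bulk of the work lies in the forward direction, which is where Lemma~\ref{Main} is designed to be applied.

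\medskip

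\emph{The easy direction (necessity).} Suppose that $\alpha_i=\alpha_j$ for some $i\neq j$. Then the $i$-th and $j$-th coordinates of every partial subsum satisfy $a_j \cdot (\text{$i$-th coordinate}) = a_i \cdot (\text{$j$-th coordinate})$, since each term of the $j$-th coordinate series is $\frac{a_j}{a_i}$ times the corresponding term of the $i$-th. Hence every element of $\on{A}_{\on{abs}}$ lies on the hyperplane $\{\bx\in\R^d : a_j x^i = a_i x^j\}$, which is a proper subset of $\R^d$. So the equality $\on{A}_{\on{abs}}=\R^d$ fails. (This works verbatim for $\on{A}$ as well, but the statement only asks about $\on{A}_{\on{abs}}$.)

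\medskip

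\emph{The hard direction (sufficiency).} Assume $\alpha_1,\dots,\alpha_d$ are pairwise distinct; after permuting coordinates (which only rescales the target point), I may assume $0<\alpha_1<\dots<\alpha_d\leq1$. Fix an arbitrary target $\bt=(t^1,\dots,t^d)\in\R^d$. The plan is to realize $\bt$ as an absolutely convergent subsum by choosing, for each coordinate $k\in\{1,\dots,d\}$, a set $W_k\subset\N$ produced by Lemma~\ref{Main} that contributes exactly the needed amount $\frac{t^k}{a_k}$ in coordinate $k$ and contributes $0$ in every other coordinate; then take the union $\bigcup_{k=1}^d W_k$. For the coordinates to not interfere, these sets must be pairwise disjoint and, more to the point, the union must still lie inside $\on{A}_{\on{abs}}$ — i.e.\ the sign alternation $(-1)^{n+1}$ must behave coherently, which is exactly why the construction is forced to live inside odd multiples of primes. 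Start from $V_1=\IP\in\MP$ (Remark~\ref{help}(i)); using Remark~\ref{help}(v) repeatedly, split off $d$ pairwise disjoint sets $V_1,\dots,V_d\in\MP$ with distinct chosen primes $p_k\in V_k$, all living inside $\IP_{|_{d}}$ so that the absolute convergence and well-definedness of $\Phi$ are unproblematic. Apply Lemma~\ref{Main} with this $V_k$, $p_k$, the index $k$, and $x=\tfrac{t^k}{a_k}$ to get $W_k\in\MW$ constructed from $(V_k,p_k)$ with $\Phi(W_k,\alpha_i)=\tfrac{t^k}{a_k}\delta_{ik}$. Because $W_k$ is constructed from $(V_k,p_k)$ and the $V_k$ are disjoint subsets of $\IP$, the $W_k$ are pairwise disjoint and their union $W=\bigcup_k W_k$ still has the property that $\Phi$ is additive over this disjoint union on each coordinate (each $W_k$ has absolutely convergent coordinate series by Lemma~\ref{base1}(C), and disjointness gives $\Phi(W,\alpha_i)=\sum_k\Phi(W_k,\alpha_i)=\tfrac{t^i}{a_i}$). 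Therefore $\sum_{n\in W}\bigl(a_1(-1)^{n+1}n^{-\alpha_1},\dots,a_d(-1)^{n+1}n^{-\alpha_d}\bigr)=\bt$, and since each coordinate series over $W$ converges absolutely, $\bt\in\on{A}_{\on{abs}}$. As $\bt$ was arbitrary, $\on{A}_{\on{abs}}=\R^d$.

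\medskip

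\emph{Where the difficulty sits.} The genuinely hard content has already been extracted into Lemma~\ref{Main} (and the lemmas feeding it): producing a single set that hits a prescribed value in one coordinate while vanishing in all the others, done by the delicate $d$-step induction balancing the vectors $\bx_i,\by_i$ via Lemma~\ref{inductive lemma}. In the proof of the theorem itself, the only point requiring care is checking that combining the $d$ single-coordinate solutions does not destroy membership in $\on{A}_{\on{abs}}$: this needs (i) pairwise disjointness of the $W_k$, which follows from building them from disjoint prime-blocks via Remark~\ref{help}(v), and (ii) that $\Phi$ is genuinely additive over the disjoint union — here absolute convergence of each coordinate series (Lemma~\ref{base1}(C)) lets us reorder freely, so the union's subsum is literally the sum of the pieces' subsums. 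One should also double-check the degenerate cases $t^k=0$ (then $W_k=\emptyset$ by Lemma~\ref{Main}) and the reduction $d=1$, both of which are immediate.
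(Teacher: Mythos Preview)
Your proof is correct and follows essentially the same route as the paper's: split into the trivial necessity (the two equal exponents force the subsums onto a hyperplane) and the sufficiency, where you choose pairwise disjoint $V_k\in\MP$ with marked primes $p_k$ via Remark~\ref{help}(v), apply Lemma~\ref{Main} to get $W_k$ hitting $t^k/a_k$ in coordinate $k$ and $0$ elsewhere, and combine. The paper normalizes to $a_k=1$ while you carry the $a_k$ through as a rescaling, and the paper makes the disjointness argument for the $W_k$ slightly more explicit (elements of $W_j$ are divisible by $p_j$, elements of $W_k$ for $k\neq j$ are not), but the substance is identical.
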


\begin{proof}
If there exist $i\neq j$ such that $\alpha_i=\alpha_j$, then clearly $\on{A}_{\on{abs}}((-1)^{n+1}n^{-\alpha_1},\dots,(-1)^{n+1}n^{-\alpha_d})\neq\R^d$. 

Assume that $\alpha_i,\ i=1,\dots,d$ are pairwise distinct numbers. Clearly, we can assume that $0<\alpha_1<\dots<\alpha_d\leq 1$ and $a_1=\dots=a_d=1$. Let $(x^1,\dots,x^d)\in\R^d$ be arbitrary. By Remark~\ref{help}(v) we can find pairwise disjoint sets $V^k\in\MP$, $p_k\in V_k$, $k\in\{1,\dots,d\}$. Using Lemma~\ref{Main}, we can find $W^k\in\MW$ such that $W^k$ is constructed from $(V^k,p_k)$ and
$$ 
\Phi(W^k,\alpha_i)=
\begin{cases}
0;\ i\neq k,\\
x^k;\ i=k.
\end{cases}
$$
Put $W=\bigcup_{k=1}^dW^k$. Assume that $i\neq j$, $i,j\in\{1,\dots,d\}$. Then any element of $W^i$ is not divisible by $p_j$ and any element of $W^j$ is divisible by $p_j$. Thus, the sets $W^k$, $k\in\{1,\dots,d\}$ are pairwise disjoint. Hence 
$$\Psi(W,(\alpha_1,\dots,\alpha_d))=\sum_{k=1}^d\Psi(W^k,(\alpha_1,\dots,\alpha_d))=(x^1,\dots,x^d)$$ 
and we are done.

\end{proof}

\section{Open problems}
\begin{problem}\label{oneopen}
Characterize a familly of all functions $f$ such that $\on{A}_{\on{abs}}((\frac{(-1)^{n}}{n},\frac{(-1)^{n}}{f(n)})=\R^2$.
\end{problem}
\begin{problem}\label{secondopen}
Characterize a familly of all functions $f$ such that $\on{A}((\frac{(-1)^{n}}{n},\frac{(-1)^{n}}{f(n)})=\R^2$.
\end{problem}
Clearly the familly defined in the first problem is smaller than that defined in the second problem. In the paper we showed that it contains any function $f(n)=n^{\alpha}$ for $\alpha\in(0,1)$. 

\end{document}